\documentclass[11pt,a4paper]{amsart}

\usepackage{amsmath,amssymb,amsthm}
\usepackage[margin=1.1in]{geometry}
\usepackage{graphicx}
\usepackage{xcolor}
\usepackage{hyperref}

\hypersetup{colorlinks=true,linkcolor=blue!50!black,citecolor=blue!50!black,
            urlcolor=blue!50!black,
            pdftitle={The `1/9'-problem for the phi-functions},
            pdfauthor={Thomas Schmelzer}}

\newtheorem{theorem}{Theorem}[section]
\newtheorem{lemma}[theorem]{Lemma}
\newtheorem{corollary}[theorem]{Corollary}
\theoremstyle{definition}
\newtheorem{remark}[theorem]{Remark}

\newcommand{\Rm}{\mathbb{R}_-}
\newcommand{\C}{\mathbb{C}}
\newcommand{\rstar}[1]{r^{*}_{#1}}

\begin{document}

\title[The `1/9'-problem for the $\varphi$-functions]
      {The `1/9'-problem for the $\varphi$-functions}

\author{Thomas Schmelzer}
\address{Jebel Quant Research, Abu Dhabi, United Arab Emirates}
\email{thomas@jqr.ae}

\subjclass[2020]{Primary 41A20, 41A25; Secondary 30E10, 65F60, 65L04}

\keywords{Rational approximation, `1/9'-problem, Halphen's constant,
$\varphi$-functions, exponential integrators, equilibrium distributions}

\thanks{All computations are reproducible from the scripts accompanying this
paper.}

\date{13 September 2026}

\dedicatory{In memory of Herbert Stahl (3 August 1942 -- 22 April 2013)}

\begin{abstract}
The functions $\varphi_\ell(z)=(e^z-s_\ell(z))/z^\ell$, with $s_\ell$ the
Taylor polynomial of $e^z$ of degree $\ell-1$, are what exponential
integrators evaluate, and they evaluate them through rational approximations
on the negative real axis $\mathbb{R}_-$. Schmelzer and Trefethen (2007/08) conjectured that the best such approximations
converge at the rate of the classical `1/9'-problem:
$E_n(\varphi_\ell)^{1/n}\to H$, Halphen's constant, for every $\ell$. We prove
it. The theorem behind it gives the same rate for $f=u_0+u_1\exp$ with $u_0$
rational and $u_1\not\equiv0$ meromorphic with finitely many poles and
$\log|u_1(z)|=o(|z|)$, the lower bound required only away from $\mathbb{R}_-$,
whenever $f$ has no pole on $\mathbb{R}_-$ (Theorem~4.4). For rational $u_1$ this is
Theorem 1 of Stahl and Schmelzer (2009), announced with its proof deferred to a manuscript that never
appeared; the hypotheses admit, for instance,
$u_1(z)=\sinh(\pi\sqrt z)/(\pi\sqrt z)$, whose zeros are infinite in number.

The proof is a reading of Gonchar and Rakhmanov (1989) rather than an extension of them. Their
theorem concerns a sequence given by a Cauchy-type integral with a varying
weight $\Phi_n$, and a contour identity of Schmelzer and Trefethen presents $u_1\exp$, once its
principal parts are removed, in exactly that form, with
$\Phi_n(t)=u_1(-nt)e^{-nt}$. The factor $u_1(-nt)$ grows subexponentially in
$n$, and their hypothesis on $\Phi_n$ divides by $2n$, so it drops out of the
external field: field, extremal arc, $S$-property and constant are all theirs,
unchanged. For $\varphi_\ell$ the factor is $t^{-\ell}$, whose pole sits at the
origin, a point of the approximation set and hence at positive distance from
every admissible contour.
\end{abstract}

\maketitle

\section{Introduction}

Exponential integrators for $\dot u=Au+g(u,t)$ are among the fastest methods
for stiff semilinear problems, and their implementation turns on evaluating
$f(A)b$ for $f$ in the family
\begin{equation}\label{eq:phi}
   \varphi_\ell(z)=\frac{e^z-s_\ell(z)}{z^\ell},
   \qquad s_\ell(z)=\sum_{k=0}^{\ell-1}\frac{z^k}{k!},
   \qquad \varphi_0=\exp .
\end{equation}
Each $\varphi_\ell$ is entire. When $A$ is negative semidefinite the relevant
spectrum lies on $\Rm=(-\infty,0]$, and a standard approach is to replace
$\varphi_\ell$ by a rational approximation on $\Rm$ and evaluate it in partial
fractions, so that only shifted linear systems $(A-z_jI)x=b$, one per pole
$z_j$, have to be solved
\cite{SchmelzerTrefethen,TWS}. Everything then depends on how well
$\varphi_\ell$ can be approximated by rational functions on $\Rm$, and on where
the poles of those approximations sit.

Write $\mathcal R_{n,m}$ for the rational functions of type $(n,m)$, that is
with numerator degree at most $n$ and denominator degree at most $m$, let
$\rstar{n,m}(f,\Rm;\cdot)$ be the best uniform approximant to $f$ on $\Rm$ from
$\mathcal R_{n,m}$, and put
\[
   E_n(f):=\bigl\|f-\rstar{n,n}(f,\Rm;\cdot)\bigr\|_{\Rm}.
\]
Let $H=0.1076539192\ldots=1/9.28902549\ldots$ be Halphen's constant. For
$f=\exp$ the classical `1/9'-problem gives $E_n(\exp)^{1/n}\to H$, and in the
sharp form of Magnus \cite{Magnus1994} and Aptekarev \cite{Aptekarev},
$E_n(\exp)=2H^{n+1/2}(1+o(1))$. For
$\ell\ge1$ much less is on record. The result of this note is that nothing
changes:
\[
   \lim_{n\to\infty}E_n(\varphi_\ell)^{1/n}=H\qquad(\ell=0,1,2,\ldots),
\]
Corollary~\ref{thm:conj} below; and more generally
$\lim_nE_n(u_0+u_1\exp)^{1/n}=H$ for $u_0$ rational and $u_1$ meromorphic with
finitely many poles and subexponential growth, Theorem~\ref{thm:class}.

Section~\ref{sec:history} sketches the history, which is also how I came to
the problem. Section~\ref{sec:rep} records the three facts about
$\varphi_\ell$ that are used. Section~\ref{sec:lower} proves
Theorem~\ref{thm:class} and, as Corollary~\ref{thm:conj}, Conjecture 2.1. The
proof has three steps and no new analysis: $u_0+u_1\exp$ is reduced, by
subtracting principal parts, to an entire $F$; $F(-nu)$ is written as a Cauchy
integral whose weight is $u_1(-nt)e^{-nt}$; and that weight differs from the
$e^{-nt}$ of Gonchar and Rakhmanov \cite{GoncharRakhmanov} by a factor their
hypothesis cannot see, because the factor grows subexponentially in $n$ and
the hypothesis divides by $2n$. Put generally, a subexponential factor in the
weight leaves the rate of their theorem unchanged; the theorem here is that
observation applied once. Section~\ref{sec:further} says what the proof
leaves open.

\section{A short history}\label{sec:history}

Cody, Meinardus and Varga \cite{CMV} showed in 1969 that $E_n(\exp)$ decays
geometrically, and Saff and Varga \cite{SaffVarga} put the rate on the list of
open questions. Sch\"onhage \cite{Schoenhage} proved that for the \emph{column}
approximants, numerator degree $0$, it is exactly $\tfrac13$; Meinardus and
Varga \cite{MeinardusVarga} extended the geometric decay from $\exp$ to
reciprocals of entire functions of perfectly regular growth with non-negative
coefficients, an early sign that the phenomenon belongs to a class and not to
one function. A diagonal approximant has twice as many free parameters, so
$\bigl(\tfrac13\bigr)^2=\tfrac19$ was the natural guess, and the problem
acquired the name it still carries.

The guess is wrong, and it was the Carath\'eodory--Fej\'er method that showed
it.
Developed by Gutknecht and Trefethen for real polynomial
\cite{GutknechtTrefethen} and then real rational \cite{TrefethenGutknecht}
approximation, CF produces near-best approximations cheaply and accurately
enough to read off the rate: Trefethen and Gutknecht \cite{TrefethenGutknecht}
found $0.1076539\ldots$, close to $1/9$ but not it, and the Remez computations
of Carpenter, Ruttan and Varga \cite{CRV} confirmed the digits. Magnus
\cite{Magnus1986} then \emph{conjectured} the closed form
\[
   H=\exp\Bigl(-\pi\frac{K'}{K}\Bigr),\qquad K(k)=2E(k),
\]
$K,K'$ complete elliptic integrals of the first kind for conjugate moduli and
$E$ that of the second, and returned to the constant over the following
fifteen years \cite{Magnus1994,MagnusMeinguet}.

The proof that $\lim E_n(\exp)^{1/n}=H$ is due to Gonchar and Rakhmanov
\cite{GoncharRakhmanov,Gonchar1986}. A best approximant interpolates $f$ at
the $2n+1$ zeros of its own error, so its denominator is orthogonal with
respect to a \emph{varying} complex weight, and the arc on which the poles
accumulate is not given in advance: it is singled out by a symmetry ($S$-)
property, and Herbert Stahl's theory \cite{Stahl1985,Stahl1986} of extremal
domains and of orthogonal polynomials with complex weights is what makes such
an arc well defined. Solving the resulting equilibrium problem for a condenser
in an external field, Gonchar and Rakhmanov obtained $H$ as the positive root
of $\sum_{l\ge1}a_lH^{\,l}=\tfrac18$, $a_l=\bigl|\sum_{b\mid l}(-1)^bb\bigr|$,
a constant Halphen had studied in an equivalent form in 1886 \cite{Halphen},
whence the name. Magnus's sharper $E_n(\exp)=2H^{n+1/2}(1+o(1))$
\cite{Magnus1994} was proved by Aptekarev \cite{Aptekarev} in 2002.

The $\varphi$-functions enter through \cite{TWS}, where Trefethen, Weideman
and I built CF approximations to $e^z$ on $\Rm$ as a competitor to Talbot
contours for inverting Laplace transforms, CF giving a type $(N,N)$
approximation indistinguishable from best by $N\approx9$. Exponential
integrators need the same for $\varphi_\ell$, and with that the `1/9'-problem
reappeared: is the rate for $\varphi_\ell$ again $H$? Numerically it plainly
was, but nothing in the literature covered a function other than $\exp$, and
\cite{SchmelzerTrefethen} could only record it as a conjecture. To settle it
Herbert Stahl and I took up the class $f=u_0+u_1\exp$ with $u_0,u_1$ rational,
which contains every $\varphi_\ell$; the result was \cite{StahlSchmelzer}. That
paper is an announcement: it sketches its proofs in \S4 and defers them to a
forthcoming paper, its reference [12], which never appeared. Its Theorems 1,
2, 4 and 7--10 therefore stand as announced results whose proofs are not in
print; nothing below rests on any of them.

\section{The $\varphi$-functions}\label{sec:rep}

Three facts about $\varphi_\ell$ are needed, and no more. The first two are
elementary and are illustrated in Figure~\ref{fig:phi}; the third, the contour
integral of \cite[Thm.~5.1]{SchmelzerTrefethen}, is the one the whole argument
turns on.

\begin{lemma}\label{lem:rep}
Let $\ell\ge1$. Then
\begin{enumerate}
\item $z^\ell\varphi_\ell(z)=e^z-s_\ell(z)$, so that
      $\varphi_\ell=u_0+u_1\exp$ with the rational functions
      $u_1(z)=z^{-\ell}$ and $u_0(z)=-s_\ell(z)/z^\ell$; and
      $\varphi_\ell(z)=\sum_{k\ge0}z^k/(k+\ell)!$ is entire.
\item $\varphi_\ell$ is positive and strictly increasing on the real axis, so
      on $\Rm$
      \[
         0<\varphi_\ell(x)\le\varphi_\ell(0)=\frac1{\ell!},
         \qquad
         \varphi_\ell(x)=\frac1{(\ell-1)!\,|x|}+O(|x|^{-2})
         \quad(x\to-\infty);
      \]
      in particular $\varphi_\ell$ extends continuously to $\Rm\cup\{\infty\}$
      with value $0$ at $\infty$, and $E_n(\varphi_\ell)$ is finite.
\item \emph{(Cauchy integral \cite[Thm.~5.1]{SchmelzerTrefethen}.)}
      For any contour $C$ winding once around $0$ and once around $z$,
      \begin{equation}\label{eq:cauchyphi}
         \varphi_\ell(z)=\frac1{2\pi i}\int_C\frac{e^s}{s^\ell}\,\frac{ds}{s-z} .
      \end{equation}
\end{enumerate}
\end{lemma}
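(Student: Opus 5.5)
The three parts are independent, and I would prove them in order. All of them rest on the power series of $e^z$.

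For (1), the identity $z^\ell\varphi_\ell=e^z-s_\ell$ is just definition \eqref{eq:phi} multiplied through by $z^\ell$. Dividing back by $z^\ell$ gives $\varphi_\ell=u_0+u_1\exp$ with the stated $u_0=-s_\ell/z^\ell$ and $u_1=z^{-\ell}$. For the series, subtract $s_\ell$ from $e^z=\sum_{k\ge0}z^k/k!$. This leaves $\sum_{k\ge\ell}z^k/k!=z^\ell\sum_{j\ge0}z^j/(j+\ell)!$. The last series has infinite radius of convergence, so $\varphi_\ell$ agrees with an entire function off $0$, and the singularity at $0$ is removable.

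For (2), I would use the integral representation
\[
\varphi_\ell(z)=\frac{1}{(\ell-1)!}\int_0^1 e^{(1-\theta)z}\theta^{\ell-1}\,d\theta ,
\]
which I check by expanding $e^{(1-\theta)z}$ and integrating termwise against Beta integrals, $\int_0^1(1-\theta)^k\theta^{\ell-1}d\theta=k!(\ell-1)!/(k+\ell)!$.

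Positivity on $\mathbb R$ is then immediate, since the integrand is positive. Strict monotonicity follows by differentiating under the integral: the derivative has integrand $(1-\theta)e^{(1-\theta)x}\theta^{\ell-1}>0$. The value at $0$ is $1/\ell!$ from the series.

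For the asymptotics as $x\to-\infty$, I would use the definition directly. There $e^x$ is exponentially small, so $\varphi_\ell(x)=-s_\ell(x)/x^\ell+O(e^{x}|x|^{-\ell})$. The leading term of $s_\ell(x)$ is $x^{\ell-1}/(\ell-1)!$, which gives $-x^{-1}/(\ell-1)!+O(x^{-2})=1/((\ell-1)!\,|x|)+O(|x|^{-2})$. Continuity at $\infty$ with limit $0$ follows, so $\varphi_\ell$ is bounded and continuous on the compactified $\Rm$. Then $E_n(\varphi_\ell)\le\|\varphi_\ell-0\|_{\Rm}=1/\ell!<\infty$.

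For (3), the integrand $e^s s^{-\ell}/(s-z)$ is meromorphic with poles only at $0$ and $z$; I assume $z\neq0$ first. By the residue theorem the integral equals the sum of the two residues. At $s=z$ the residue is $e^z/z^\ell$. At $s=0$, expand $1/(s-z)=-\sum_{m\ge0}s^m/z^{m+1}$ and take the coefficient of $s^{\ell-1}$ in $e^s\cdot(1/(s-z))$. This is $-\sum_{k+m=\ell-1}z^{-m-1}/k!=-z^{-\ell}\sum_{k=0}^{\ell-1}z^k/k!=-s_\ell(z)/z^\ell$. Summing the two residues gives $\varphi_\ell(z)$.

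The case $z=0$ follows by continuity, since both sides are continuous in $z$ for $z$ inside $C$ and away from it. Alternatively, the single residue of $e^s/s^{\ell+1}$ is $1/\ell!$, which agrees with (2).

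None of this is a real obstacle. The only points needing care are the residue bookkeeping at $s=0$ in (3) and the removable-singularity remark in (1).
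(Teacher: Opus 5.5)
Your proof is correct and follows the paper's argument essentially step for step. You use the same Taylor-coefficient cancellation for (i), the same Duhamel integral with a positive integrand and positive $x$-derivative for (ii), and the same two-residue computation at $s=0$ and $s=z$ for (iii). The only differences are minor: you verify the Duhamel formula by termwise Beta integrals where the paper uses integration by parts from $\ell=1$, and you handle $z=0$ in (iii) explicitly, which the paper leaves implicit.
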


\begin{proof}
(i) is \eqref{eq:phi} together with the cancellation of the first $\ell$ Taylor
coefficients of $e^z-s_\ell$. For (ii), integration by parts and the value at
$\ell=1$ identify $\varphi_\ell$ with the Duhamel integral
$\frac1{(\ell-1)!}\int_0^1e^{(1-\theta)z}\theta^{\ell-1}d\theta$; its integrand
is positive, and differentiating under the integral sign gives
$\varphi_\ell'>0$ on the real axis. The asymptotics follow from (i) with
$e^x\to0$ and $s_\ell(x)\sim x^{\ell-1}/(\ell-1)!$. For (iii), the integrand
has poles at $s=0$, of order $\ell$, and at $s=z$; the residue at $s=z$ is
$e^z/z^\ell$, and expanding $1/(s-z)=-\sum_{k\ge0}s^kz^{-k-1}$ shows the
residue at $s=0$ to be $-\sum_{k<\ell}z^{k-\ell}/k!=-s_\ell(z)/z^\ell$. Summing
the two and comparing with (i) gives \eqref{eq:cauchyphi}.
\end{proof}

\begin{figure}[htbp]
\centering
\includegraphics[width=\textwidth]{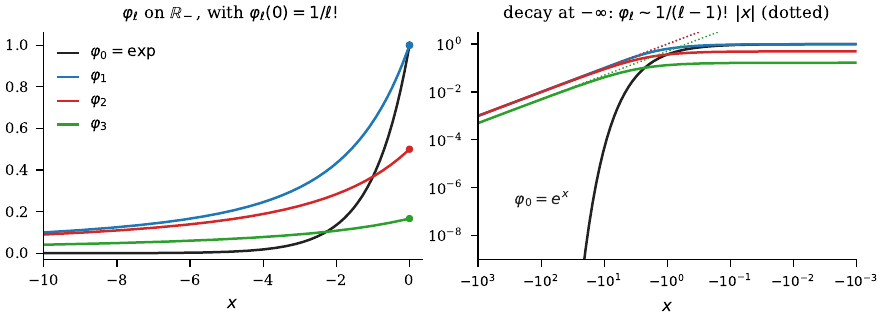}
\caption{Lemma~\ref{lem:rep}(ii). Left: $\varphi_\ell$ is positive and
increasing on $\Rm$, with maximum $\varphi_\ell(0)=1/\ell!$ (dots). Right: the
decay at $-\infty$ is only algebraic, $\varphi_\ell(x)\sim1/(\ell-1)!\,|x|$
(dotted), where $\varphi_0=\exp$ decays exponentially. Both are bounded on
$\Rm$ and vanish at $\infty$, which is what makes the approximation problem on
an unbounded set well posed.}
\label{fig:phi}
\end{figure}

So $\varphi_\ell$ belongs to the class $u_0+u_1\exp$ with $u_0,u_1$ rational
that Stahl and I took up in \cite{StahlSchmelzer}, and it belongs to
it in a slightly delicate way: $u_0$ and $u_1$ both have a pole of order $\ell$
at the origin, an endpoint of $\Rm$, and only their combination is bounded
there. Section~\ref{sec:lower} treats the class, and this is the feature it has
to accommodate.

\section{The rate for $u_0+u_1\exp$}\label{sec:lower}

The natural attempt is to convert an approximant to $\varphi_\ell$ into one to
$\exp$ and quote the classical rate \cite{GoncharRakhmanov}. It cannot be done,
and it is worth seeing why before looking for something better.

\begin{remark}[no rational transfer]\label{prop:notransfer}
Let $\ell\ge1$. An identity $h\varphi_\ell+c=u\exp+v$ with $h,c$ rational
forces $u=h/z^\ell$ and $v=c-hs_\ell/z^\ell$, and the transfer it suggests,
$R\mapsto(hR+c-v)/u$, is one and the same map whatever $h$ and $c$ are:
\[
   \frac{hR+c-v}{u}=\frac{hR+hs_\ell/z^\ell}{h/z^\ell}=z^\ell R+s_\ell .
\]
Its loss is exact, $\exp-(z^\ell R+s_\ell)=z^\ell(\varphi_\ell-R)$, and
unbounded on $\Rm$: the constant $R=1$ has $\|\varphi_\ell-1\|_{\Rm}\le1$
while $z^\ell(\varphi_\ell-1)=e^z-s_\ell-z^\ell$ is not bounded. So no $M$ has
$\|\exp-(z^\ell R+s_\ell)\|_{\Rm}\le M\|\varphi_\ell-R\|_{\Rm}$ for all
rational $R$, and no choice of $h$ can change that, since $h$ cancels. The
obstruction is the one that blocks every soft route: $z^\ell$ is unbounded at
one end of $\Rm$ and $z^{-\ell}$ at the other, and $\varphi_\ell$ is built from
$\exp$ by exactly that multiplication.
\end{remark}

The way through is not to transfer the approximant at all, but to transfer the
\emph{representation}. Gonchar and Rakhmanov \cite{GoncharRakhmanov} prove
their theorem not for a
fixed function but for a sequence given by a Cauchy-type integral with a
varying weight, and $\varphi_\ell$ has exactly such a representation --- it is
Lemma~\ref{lem:rep}(iii). We recall their statement in the form they use.

\begin{theorem}[{Gonchar--Rakhmanov \cite[Thm.~1]{GoncharRakhmanov}}]
\label{thm:GR}
Let $E$ be a union of finitely many continua in $\overline\C$, let $F$ be a
compact set of positive capacity in $\overline\C\setminus E$, and let
$\{\Phi_n\}$ be holomorphic in a neighbourhood $\Omega$ of $F$ with
$\Omega\cap E=\emptyset$. Suppose
\begin{enumerate}
\item[$1^\circ$.] $\dfrac1{2n}\log\dfrac1{|\Phi_n(\zeta)|}\rightrightarrows
      \varphi(\zeta)$ on $\Omega$;
\item[$2^\circ$.] $(E,F,\varphi)$ has the $S$-property.
\end{enumerate}
Then, $F$ being a rectifiable arc,
\[
   \lim_{n\to\infty}\rho_n(f_n,E)^{1/n}=e^{-2w(E,F,\varphi)},
   \qquad
   f_n(z):=\int_F\frac{\Phi_n(t)}{t-z}\,dt ,
\]
where $\rho_n(\cdot,E)$ is the error of best approximation on $E$ by rational
functions of order at most $n$, that is of type $(n,n)$.
\end{theorem}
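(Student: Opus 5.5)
Theorem~\ref{thm:GR} is quoted from \cite{GoncharRakhmanov} and is used below as a black box. Still, here is the route I would take to prove it. It is the standard one: multipoint Pad\'e interpolation with varying orthogonality, and potential theory for the condenser $(E,F)$ in the field $\varphi$.

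Let $\lambda=(\lambda_E,\lambda_F)$ be the equilibrium pair of the condenser in the field $\varphi$. Here $\lambda_F$ is a positive unit measure on $F$ and $\lambda_E$ a negative unit measure on $E$. The total potential $V^\lambda+\varphi$ equals $w=w(E,F,\varphi)$ on $\operatorname{supp}\lambda_F$, is $\ge w$ on $F$, and is constant on $E$. The $S$-property ($2^\circ$) says that on $\operatorname{supp}\lambda_F$ the two normal derivatives of $V^\lambda+\varphi$ coincide.

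\emph{Upper bound.} Choose interpolation points $\omega_{2n}(z)=\prod_{j\le 2n}(z-z_{j,n})$ on $E$ whose counting measures converge weak-$*$ to $-2\lambda_E$. Let $r_n=P_n/Q_n$ be the multipoint Pad\'e interpolant of $f_n$ at these points. By Cauchy's theorem, $Q_n$ is non-Hermitian orthogonal on $F$:
\[
\int_F t^kQ_n(t)\frac{\Phi_n(t)}{\omega_{2n}(t)}\,dt=0,\qquad k<n,
\]
and the Hermite formula gives
\[
(f_n-r_n)(z)=\frac{\omega_{2n}(z)}{Q_n^2(z)}\int_F\frac{Q_n^2(t)\Phi_n(t)}{\omega_{2n}(t)}\frac{dt}{t-z}.
\]
The key step is then Stahl's theorem on complex orthogonal polynomials with varying weights. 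By $1^\circ$ the effective field is $\varphi$ together with the potential of $-\lambda_E$, so the $S$-property of $F$ forces the zero counting measures of $Q_n$ to converge to $\lambda_F$. It also forces $\frac1{2n}\log\bigl|\int_FQ_n^2\Phi_n/\omega_{2n}\bigr|\to -w$ after the constants are matched. Substituting these into the Hermite formula, the exponents combine to $|f_n-r_n|^{1/n}\to e^{-2w}$ uniformly on $E$, which gives $\limsup\rho_n^{1/n}\le e^{-2w}$.

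\emph{Lower bound.} Take any $r\in\mathcal R_{n,n}$ with error $\varepsilon_n$ on $E$. I would write $f_n=\frac1{2\pi i}\oint_\Gamma f_n(\zeta)\,d\zeta/(\zeta-z)$ over a contour $\Gamma$ separating $E$ from $F$, subtract $r$, and multiply by a suitable polynomial of degree $n$ vanishing at the poles of $r$. This turns the small error into a smallness statement for the Cauchy transform of $\Phi_n$ tested against polynomials of degree $2n$ on $F$. The Gonchar--Walsh principle, combined with the extremality of the equilibrium energy (the condenser minimum being attained at $\lambda$), then shows that $\varepsilon_n^{1/n}$ cannot fall below $e^{-2w}$ in the limit. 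The $S$-property is what makes the arc $F$ extremal, so that no competing pole arc can yield more than $w$.

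The main obstacle is the zero-distribution step for $Q_n$. The weight is complex and varying, so the zeros are not confined by positivity. The argument has to be Stahl's: an arbitrary weak-$*$ limit of the zero measures is shown, through a variational argument on the $S$-curve, to satisfy the equilibrium conditions, and uniqueness then identifies it with $\lambda_F$. The uniform convergence in $1^\circ$ on a full neighbourhood $\Omega$ of $F$ is exactly what allows the contour to be deformed near $F$ in that argument.
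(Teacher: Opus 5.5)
Like the paper, you treat Theorem~\ref{thm:GR} as a citation of \cite{GoncharRakhmanov}, and your sketch of the upper bound is their route: multipoint Pad\'e interpolation at nodes distributed like $-2\lambda_E$, non-Hermitian orthogonality of $Q_n$ against $\Phi_n/\omega_{2n}$ on $F$, and the varying-weight extension of Stahl's theorem. The paper adds only that this is the rectifiable-arc form of \cite[\S1.4]{GoncharRakhmanov}, with the third hypothesis of their Theorem~1 (a non-vanishing jump across $F$) carried by $\Phi_n$, which is zero-free on $\Omega$ (as $1^\circ$ requires for large $n$). If your sketch is meant to stand on its own, note that weak convergence of the zero measures of $Q_n$ allows $o(n)$ spurious zeros on or near $E$, so $|f_n-r_n|^{1/n}\to e^{-2w}$ holds only in capacity on $E$, not uniformly; bounding $\rho_n$ therefore needs an extra step (e.g.\ deleting the principal parts of $r_n$ at its poles near $E$ and estimating the remainder by a Cauchy integral over a contour around $E$ on which the error is small), and your lower-bound paragraph names ingredients rather than giving an argument.
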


This is the form \cite[\S1.4]{GoncharRakhmanov} records for $F$ a rectifiable
arc, in which the density is integrated directly and the third hypothesis of
their Theorem 1 --- that the jump of $f$ across $F$ be non-vanishing --- is
carried by $\Phi_n$, which below is zero-free on $\Omega$. It is the form used
in their own \S2.

In \cite[\S2]{GoncharRakhmanov} this is applied with $E=[0,+\infty]$ and
$\Phi_n(t)=e^{-nt}$, for which $1^\circ$ gives the field
$\varphi(\zeta)=\tfrac12\operatorname{Re}\zeta$; the arc $F^{*}$ solving
$2^\circ$ is constructed there, and $e^{-2w}$ is identified with $H$. Their
$\gamma$ is $F^{*}$, joining $b=3+i\beta$ to $\bar b$ through
$\operatorname{Re}z<3$, together with two rays parallel to the real axis
joining $b,\bar b$ to $\infty$ in $\operatorname{Re}z>3$, oriented to wind once
about $E$. The number $3$ is theirs and is arbitrary. Their class of admissible
arcs is indexed by the abscissa, the equilibrium charge sits on a subarc with
endpoints $a,\bar a$ that the choice does not move, and the rest of $F^{*}$ may
be taken straight, with $\beta=\operatorname{Im}a$; all the abscissa has to do
is make the rays negligible. They fix it at $3$ because $e^{-3}<H$, noting that
``the choice of the number $3$ is connected only with this inequality''. It is
the same inequality that is used below.

Everything now follows from one observation: replacing $e^{-nt}$ by
$u_1(-nt)e^{-nt}$ changes $\Phi_n$ by a factor that grows subexponentially in
$n$, and $1^\circ$ divides by $2n$.

\begin{lemma}[Cauchy representation]\label{lem:cauchy}
Let $u_1\not\equiv0$ be meromorphic on $\C$ with finitely many poles and
$\log|u_1(z)|\le\varepsilon|z|$ for every $\varepsilon>0$ and all large $|z|$,
which is the upper bound of (b) below and all that is used here. Let $\Pi$ be
the sum of the principal parts
of $u_1\exp$ at the poles of $u_1$ --- a rational function, the poles being
finite in number --- and put $F:=u_1\exp-\Pi$, an entire function. Then for
every $n$ large enough that $\gamma$ winds once about
$-w/n$ for each pole $w$ of $u_1$,
\begin{equation}\label{eq:cauchyrep}
   F(-nu)=\frac1{2\pi i}\oint_\gamma u_1(-nt)\,e^{-nt}\,\frac{dt}{t-u},
   \qquad u\in E .
\end{equation}
\end{lemma}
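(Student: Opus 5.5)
The plan is to reduce \eqref{eq:cauchyrep} to the residue theorem after the change of variables $s=-nt$, $z=-nu$, and to control the part of $\gamma$ at infinity using the subexponential bound on $u_1$. Put $g(s):=u_1(s)e^s$. Then $F=g-\Pi$ is entire. A direct computation gives $dt/(t-u)=ds/(s-z)$. The map $t\mapsto -nt$ is a rotation by $\pi$ composed with a dilation, so it preserves orientation. Hence the right-hand side of \eqref{eq:cauchyrep} equals $\frac1{2\pi i}\int_{-n\gamma}g(s)\,ds/(s-z)$, where $-n\gamma$ winds once, positively, about $-nE=\Rm$. Since $\Pi$ is rational and vanishes at infinity, it suffices to show that this integral equals $g(z)-\Pi(z)$.

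\textbf{Step 1: convergence and truncation.} On the two horizontal rays of $\gamma$ we have $\operatorname{Re}t\to+\infty$. There $|e^{-nt}|=e^{-n\operatorname{Re}t}$, while $|u_1(-nt)|\le e^{\varepsilon n|t|}$ for large $|t|$, and $|t|\le \operatorname{Re}t+\beta$ on the rays. Taking $\varepsilon<1/2$, say, the integrand decays exponentially along the rays, so the integral in \eqref{eq:cauchyrep} converges absolutely.

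Fix a finite $u\in E$ and take $R>\max(3,u)$. Let $\gamma_R$ be the closed contour formed by the part of $\gamma$ with $\operatorname{Re}t\le R$ together with the vertical segment $\sigma_R$ from $R-i\beta$ to $R+i\beta$, oriented consistently. On $\sigma_R$ the integrand is bounded by
\[
e^{-nR}\,e^{\varepsilon n(R+\beta)}\,\frac{1}{R-u},
\]
and $\sigma_R$ has length $2\beta$. So its contribution tends to $0$ as $R\to\infty$, and $\oint_\gamma=\lim_{R\to\infty}\oint_{\gamma_R}$.

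\textbf{Step 2: residues.} Inside $\gamma_R$ the integrand $u_1(-nt)e^{-nt}/(t-u)$ has two kinds of poles.
\begin{itemize}
\item[$\circ$] At $t=u$ the residue is $u_1(-nu)e^{-nu}=g(z)$.
\item[$\circ$] At $t=-w/n$, for each pole $w$ of $u_1$, the pole lies inside $\gamma_R$ by the hypothesis on $n$. These points tend to $0\in E$, which is at positive distance from $\gamma$, so all large $n$ qualify.
\end{itemize}
In the $s$-variable the residue of $g(s)/(s-z)$ at $s=w$ is $-P_w(z)$, where $P_w$ is the principal part of $g$ at $w$. To see this, split $g=P_w+(\text{holomorphic near }w)$; the holomorphic part contributes nothing. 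For the rational function $P_w(s)/(s-z)$, which is $O(s^{-2})$ at infinity, the residues sum to zero, so its residue at $w$ is the negative of its residue $P_w(z)$ at $s=z$. Summing over the poles gives $g(z)-\sum_wP_w(z)=g(z)-\Pi(z)=F(z)=F(-nu)$, which is \eqref{eq:cauchyrep}.

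If $u$ happens to coincide with some $-w/n$, both sides are continuous in $u$ near $E$: the left side because $F$ is entire, the right side because $u$ stays away from $\gamma$. The identity therefore persists by continuity. At $u=\infty$, read $1/(t-u)$ as $0$, so the right side is $0$. The left side is also $0$, as a limit: $u_1(x)e^{x}\to0$ as $x\to-\infty$ by the growth bound, and $\Pi(x)\to0$.

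\textbf{Main obstacle.} The only non-formal point is Step~1, namely closing the unbounded contour. It uses exactly the upper bound $\log|u_1(z)|\le\varepsilon|z|$ with some $\varepsilon<1$, and I would check carefully that this bound beats $e^{-n\operatorname{Re}t}$ uniformly on $\sigma_R$ and on the rays. A second point to verify is the orientation bookkeeping under $s=-nt$, which fixes the overall sign.
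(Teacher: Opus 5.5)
Your proposal is correct and takes the paper's route: substitute $s=-nt$, apply the residue theorem to $g(s)/(s-z)$ with residues $g(z)$ and $-P_w(z)$, and recover the finitely many exceptional points $u=-w/n$ by a limiting argument. There you use continuity where the paper uses the identity theorem on the region $D$. Your truncation by the segment $\sigma_R$ and your check at $u=\infty$ only spell out what the paper leaves implicit.
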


\begin{proof}
Write $g:=u_1\exp$, meromorphic with poles exactly those of $u_1$. If $g$ has
principal part $P_w(z)=\sum_{j\le q}a_j(z-w)^{-j}$ at $w$, then
$\operatorname{Res}_{s=w}g(s)/(s-z)=-P_w(z)$, since the $j$-th term
contributes $a_j(-1)^{j-1}(w-z)^{-j}=-a_j(z-w)^{-j}$. Let $C$ wind once about
$z$ and about every pole of $u_1$; as $\operatorname{Re}s\to-\infty$ along $C$
the integrand decays, since $|g(s)|\le e^{\operatorname{Re}s+o(|s|)}$, so the
residue theorem applies and
\[
   \frac1{2\pi i}\oint_C\frac{g(s)}{s-z}\,ds=g(z)-\Pi(z)=F(z).
\]
Now put $z=-nu$ and substitute $s=-nt$: then $ds=-n\,dt$ and
$s-z=-n(t-u)$, so $ds/(s-z)=dt/(t-u)$ exactly, and $C=-n\gamma$ winds once
about $-nu$ and about every $w$ precisely when $\gamma$ winds once about $u$
and about every $-w/n$. Since $u\in E$ and $-w/n\to0\in E$, both hold for
large $n$.

The residue at $s=z$ is $g(z)$ only when $z$ is not itself a pole of $u_1$, so
the above proves \eqref{eq:cauchyrep} for every $u\in E$ other than the
finitely many points $-w/n$. It holds at those too, by continuation. Let $D$
be the region $\gamma$ winds once about: open, connected, containing every
finite point of $E$ and, for $n$ large, the points $-w/n$. The left side of
\eqref{eq:cauchyrep} is entire in $u$. The right side is holomorphic on $D$:
for $u\in D$ the factor $t-u$ does not vanish on $\gamma$, and the integral
converges absolutely and locally uniformly in $u$, since the integrand decays
like $e^{-n\operatorname{Re}t(1-\epsilon)}$ along the rays while $|t-u|$ stays
bounded below on compact subsets of $D$. Two functions holomorphic on $D$ that
agree off finitely many points agree on $D$. The point is not idle --- for
$u_1=z^{-\ell}$ the excluded point is $u=0$, an endpoint of $E$, and
Corollary~\ref{thm:conj} evaluates there.
\end{proof}

For $u_1(z)=z^{-\ell}$ the principal part of $e^z/z^\ell$ at the origin is
$s_\ell(z)/z^\ell$, so $F=\varphi_\ell$ and \eqref{eq:cauchyrep} is
Lemma~\ref{lem:rep}(iii), rescaled. Lemma~\ref{lem:cauchy} is nothing but that
identity with $z^{-\ell}$ replaced by an arbitrary such $u_1$, and it is
proved the same way; the $\varphi$-case is what suggested it.

\begin{theorem}\label{thm:class}
Let $u_0$ be rational and let $u_1\not\equiv0$ be meromorphic on $\C$ with
\begin{enumerate}
\item[(a)] finitely many poles, and
\item[(b)] $\log|u_1(z)|\le\varepsilon|z|$ for every $\varepsilon>0$ and all
      large $|z|$, with the matching lower bound
      $\log|u_1(z)|\ge-\varepsilon|z|$ holding uniformly on each sector
      $|\arg z-\pi|\ge\delta>0$.
\end{enumerate}
Suppose $f=u_0+u_1\exp$ has no pole on $\Rm$. Then
\[
   \lim_{n\to\infty}E_n(f)^{1/n}=H .
\]
In particular this holds for every rational $u_1\not\equiv0$.
\end{theorem}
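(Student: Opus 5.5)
We reduce the theorem to Theorem~\ref{thm:GR} applied to the sequence produced by Lemma~\ref{lem:cauchy}. The first step removes the rational part. Write $f=u_0+\Pi+F$, where $\Pi$ and $F$ are as in Lemma~\ref{lem:cauchy}. The function $R_0:=u_0+\Pi$ is rational, and $F$ is entire. Since $f$ has no pole on $\Rm$ and $F$ is entire, $R_0$ has no pole on $\Rm$ either. Moreover, $F=u_1\exp-\Pi$ tends to $-\Pi(\infty)$ along $\Rm$, because $|u_1e^z|\le e^{z+o(|z|)}\to0$; so $F$ is bounded on $\Rm$, and so is $R_0$. If $R_0$ has type $(p,p)$, then adding or subtracting $R_0$ shifts the degree by at most $p$:
\[
E_{n+p}(F)\le E_n(f)\le E_{n-p}(F).
\]
A fixed shift does not affect $n$-th roots, so it suffices to prove $\lim E_n(F)^{1/n}=H$.

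The second step rescales. Rational approximation of type $(n,n)$ is invariant under $z=-nu$. Hence $E_n(F)=\rho_n(f_n,E)$ with $E=[0,+\infty]$ and $f_n(u):=F(-nu)$. By Lemma~\ref{lem:cauchy}, for large $n$,
\[
f_n(u)=\frac{1}{2\pi i}\oint_\gamma \frac{\Phi_n(t)}{t-u}\,dt,\qquad \Phi_n(t)=u_1(-nt)e^{-nt}.
\]
This is exactly the form required by Theorem~\ref{thm:GR}, with $F$ there being Gonchar--Rakhmanov's contour $\gamma$ and the constant $1/(2\pi i)$ harmless.

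The third step verifies hypothesis $1^\circ$ and is the crux. On a neighbourhood $\Omega$ of $\gamma$ bounded away from $E=[0,\infty]$, the points $-nt$ with $t\in\Omega$ lie in a sector $|\arg z-\pi|\ge\delta$ once $|nt|$ is large. On that sector (b) gives $|\log|u_1(-nt)||\le\varepsilon n|t|$. Hence
\[
\frac{1}{2n}\log\frac{1}{|\Phi_n(t)|}=\tfrac12\operatorname{Re}t+O(\varepsilon|t|),
\]
which converges to $\varphi(t)=\tfrac12\operatorname{Re}t$. The convergence is uniform on compact parts of $\Omega$. Near $t=0$ there is no issue, since $0\in E$ lies at positive distance from $\Omega$; the same distance keeps $\Phi_n$ holomorphic on $\Omega$ for large $n$, because the poles $-w/n$ of $u_1(-n\,\cdot)$ tend to $0$.

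The main obstacle is the unbounded part of $\gamma$. There uniform convergence fails in absolute terms, because the error $\varepsilon|t|$ grows along the rays. The plan is not to claim literal uniform convergence on the rays but to handle them as Gonchar and Rakhmanov handle theirs. Split $\gamma$ into the compact $S$-arc $F^*$ together with bounded connectors, and the two rays in $\operatorname{Re}t>3$. On the compact piece, apply Theorem~\ref{thm:GR} with the field and $S$-property unchanged. On the rays, bound the contribution directly. There $|\Phi_n(t)|\le e^{-n(1-\varepsilon)\operatorname{Re}t}$, so the ray integral has modulus at most $C\,e^{-3n(1-\varepsilon)}$ uniformly for $u\in E$. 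Since $e^{-3}<H$, choosing $\varepsilon$ small keeps this below $(H-\eta)^n$, and the ray terms perturb $\rho_n$ by a negligible amount in both directions. This is exactly the role the paper assigns to the abscissa $3$.

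Finally, Theorem~\ref{thm:GR} needs the density to be zero-free on $\Omega$ for the lower bound. Here $\Phi_n$ can vanish only where $u_1(-nt)=0$, and the lower bound in (b) excludes such zeros for $t\in\Omega$ and large $n$: $\log|u_1(-nt)|\ge-\varepsilon n|t|$ is finite. Combining the pieces gives $\lim E_n(F)^{1/n}=e^{-2w}=H$. For rational $u_1$, condition (b) is immediate, since then $\log|u_1(z)|=O(\log|z|)$.
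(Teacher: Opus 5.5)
Your proposal is correct and follows the paper's proof essentially step for step. It uses the same reduction $f=(u_0+\Pi)+F$ with an index shift, the same rescaling to $E=[0,+\infty]$ with the Cauchy representation $\Phi_n(t)=u_1(-nt)e^{-nt}$, the same split of $\gamma$ into $F^{*}$ and two rays (the rays controlled by the upper half of (b) and $e^{-3}<H$), and the same check of $1^\circ$ and zero-freeness on a bounded neighbourhood of $F^{*}$ via the sectorial lower bound.

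One small correction. The sector claim in your third step holds only for a neighbourhood of the compact arc $F^{*}$, not of all of $\gamma$. Along the rays $\arg t\to0$, so $-nt$ approaches the direction of $\Rm$, where no lower bound is available. That is a second reason, besides the growth of $\varepsilon|t|$, why the rays must be split off, as you then do.
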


The asymmetry in (b) is deliberate, and it is what admits zeros in infinite
number. A lower bound can fail only near a zero of $u_1$; the proof needs one
only on a set the argument keeps in a sector away from $\Rm$, and asks nothing
beyond the upper bound elsewhere, so the zeros are free to accumulate on $\Rm$.
For rational $u_1$, and whenever $u_1$ has finitely many zeros, the two halves
of (b) say the same thing. Infinitely many zeros are allowed if they crowd
towards $\Rm$: $u_1(z)=\sinh(\pi\sqrt z)/(\pi\sqrt z)$, with a zero at every
$-k^2$, satisfies (a) and (b), as Remark~\ref{rem:scope} checks.

\begin{proof}
Let $F=u_1\exp-\Pi$ as in Lemma~\ref{lem:cauchy}. Then $f-F=u_0+\Pi$ is
rational, of some degree $d$ independent of $n$. Since $F$ is entire and $f$
has no pole on $\Rm$, neither has $f-F$. Moreover $F$ is bounded on $\Rm$:
it is entire, and both $u_1e^x$, which is $e^{x+o(|x|)}$ by the upper bound in
(b), and $\Pi(x)$ tend to $0$ as $x\to-\infty$. Adding a fixed rational
function of degree $d$ shifts the index by at most $d$: if $R$ has type
$(n,n)$ then $R-(u_0+\Pi)$ has type $(n+d,n+d)$ and approximates $F$ exactly as
well as $R$ approximates $f$, while $S+(u_0+\Pi)$ has type $(n,n)$ whenever $S$
has type $(n-d,n-d)$, so
\[
   E_{n+d}(F)\ \le\ E_n(f)\ \le\ E_{n-d}(F),
\]
whence $\lim E_n(f)^{1/n}=\lim E_n(F)^{1/n}$ if either exists. Replacing
$x$ by $-u$ and using that $u\mapsto nu$ maps $E=[0,+\infty]$ onto itself and
preserves the order of a rational function,
\begin{equation}\label{eq:scal}
   E_n(F)=\rho_n\bigl(F(-\,\cdot\,),E\bigr)
        =\rho_n\bigl(F(-n\,\cdot\,),E\bigr),
\end{equation}
the device of \cite[(6)]{GoncharRakhmanov}. By Lemma~\ref{lem:cauchy},
$F(-nu)$ is the integral \eqref{eq:cauchyrep} over $\gamma$.

Split $\gamma=F^{*}\cup\text{rays}$. On the rays $\operatorname{Re}t\ge3$ and
$|\operatorname{Im}t|=|\beta|$, so $|t-u|\ge|\beta|$ for every $u\in E$,
uniformly, while $|t|\le\kappa\operatorname{Re}t$ there for a constant
$\kappa$ depending only on $\beta$. By the upper bound in (b) --- the rays run
towards $\Rm$ after reflection, and there nothing more is available, nor
needed --- $|u_1(-nt)e^{-nt}|\le e^{-n\operatorname{Re}t+\epsilon_nn|t|}
\le e^{-n\operatorname{Re}t(1-\kappa\epsilon_n)}$ with $\epsilon_n\to0$. Let
$h_n(u)$ be the integral over the rays. Its integrand is holomorphic in $u$
off the rays, and the bound just given is integrable along them uniformly for
$u$ in the strip $|\operatorname{Im}u|<|\beta|$, so $h_n$ is holomorphic on
that strip, a neighbourhood of $E$; on $E$ itself $|t-u|\ge|\beta|$ gives
$\|h_n\|_E=O\bigl(e^{-3n(1-\kappa\epsilon_n)}\bigr)$, and $h_n(u)\to0$ as
$u\to+\infty$. Since $e^{-3}<H$ this is
$o(H^n)$ for large $n$, and $h_n$ is one fixed function for each $n$, so
$|\rho_n(g_n+h_n,E)-\rho_n(g_n,E)|\le\|h_n\|_E=o(H^n)$, where
$g_n(u):=\int_{F^{*}}u_1(-nt)e^{-nt}(t-u)^{-1}dt$.

Apply Theorem~\ref{thm:GR} to $g_n$, that is with
$\Phi_n(t):=u_1(-nt)e^{-nt}$, $F=F^{*}$ and
$\varphi(\zeta)=\tfrac12\operatorname{Re}\zeta$. Take a bounded neighbourhood
$\Omega$ of $F^{*}$ with $\overline\Omega\cap E=\emptyset$, which exists
because $F^{*}$ is compact and disjoint from $E$. Three things are to be
checked.
\begin{enumerate}
\item[(i)] \emph{$\Phi_n$ is holomorphic and zero-free on $\Omega$ for
$n\ge n_0$.} On $\Omega$, $|t|$ is bounded above and away from $0$, so
$|{-nt}|\to\infty$ uniformly. Because $E=[0,+\infty]$, the set $-\Omega$ is
disjoint from $\Rm$, and hence so is $-\lambda\Omega$ for every $\lambda>0$:
the zeros of $u_1$ on $\Rm$ are never met, however many there are. The same
disjointness places $\Omega$ in a sector $|\arg t|\ge\delta'$, $\delta'>0$,
since $\overline\Omega$ is compact and misses $[0,+\infty]$, so $-n\Omega$
lies in $|\arg z-\pi|\ge\delta'$ for every $n$. The lower bound in (b),
uniform there, leaves $u_1$ no zeros of large modulus in that sector, and by
(a) the poles are finite in number; both are avoided once $n\delta$ is large.
Discarding finitely many $n$ changes no limit.
\item[(ii)] \emph{Condition $1^\circ$, with the field of $\exp$.} This is
where the lower bound in (b) is asked, and it is the only place. With
$|{-nt}|\to\infty$ uniformly on $\Omega$ and $-n\Omega$ inside the sector,
(b) gives $\log|u_1(-nt)|=o(n)$ uniformly on $\Omega$, so
\[
   \frac1{2n}\log\frac1{|\Phi_n(t)|}
   =\tfrac12\operatorname{Re}t-\frac{\log|u_1(-nt)|}{2n}
   \ \rightrightarrows\ \tfrac12\operatorname{Re}t
   \qquad\text{on }\Omega :
\]
the factor $u_1(-nt)$ grows subexponentially in $n$, and $1^\circ$ divides by
$2n$.
\item[(iii)] \emph{Condition $2^\circ$.} It involves $E$, $F$ and $\varphi$
alone, and $(E,F^{*},\tfrac12\operatorname{Re}\zeta)$ is the triple of
\cite[\S2]{GoncharRakhmanov}, which has the $S$-property there, with the same
$w$.
\end{enumerate}
Hence $\rho_n(g_n,E)^{1/n}\to e^{-2w}=H$. Since
$\|h_n\|_E/\rho_n(g_n,E)\to0$, also $\rho_n(g_n+h_n,E)^{1/n}\to H$; and
$F(-nu)=(2\pi i)^{-1}(g_n+h_n)(u)$, where the constant is harmless because
$(2\pi)^{-1/n}\to1$. With \eqref{eq:scal} and $E_{n+d}(F)\le E_n(f)\le
E_{n-d}(F)$, $E_n(f)^{1/n}\to H$.
\end{proof}

\begin{corollary}[Conjecture 2.1 of \cite{SchmelzerTrefethen}]\label{thm:conj}
For every $\ell\ge0$, $\ \lim_{n\to\infty}E_n(\varphi_\ell)^{1/n}=H$.
\end{corollary}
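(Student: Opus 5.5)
The corollary follows by specialising Theorem~\ref{thm:class}; the only work is to check its hypotheses. I treat $\ell=0$ and $\ell\ge1$ separately.

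For $\ell=0$, $\varphi_0=\exp$. I take $u_0\equiv0$ and $u_1\equiv1$. Then $u_1$ is rational and nonzero, it has no poles, and $\log|u_1|=0$ gives both halves of (b). Also $f=\exp$ has no pole on $\Rm$. The theorem then returns $\lim E_n(\exp)^{1/n}=H$, the Gonchar--Rakhmanov result. Alternatively one can quote \cite{GoncharRakhmanov} directly.

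For $\ell\ge1$, I use Lemma~\ref{lem:rep}(i): $\varphi_\ell=u_0+u_1\exp$ with $u_1(z)=z^{-\ell}$ and $u_0(z)=-s_\ell(z)/z^\ell$. Both are rational, and $u_1\not\equiv0$. The remaining hypotheses check as follows.
\begin{enumerate}
\item[(a)] $u_1$ has a single pole, at the origin.
\item[(b)] $\log|u_1(z)|=-\ell\log|z|$. For large $|z|$ this lies between $-\varepsilon|z|$ and $0\le\varepsilon|z|$, uniformly in every direction, so the lower bound holds in particular on each sector $|\arg z-\pi|\ge\delta$.
\item[No pole on $\Rm$.] This is the one delicate point. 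Both $u_0$ and $u_1$ have a pole of order $\ell$ at $0\in\Rm$. The hypothesis of Theorem~\ref{thm:class} concerns $f$ itself, though, and $f=\varphi_\ell$ is entire by Lemma~\ref{lem:rep}(i), so it has no pole anywhere.
\end{enumerate}
It is worth noting why this is legitimate. In the theorem's proof, $f-F=u_0+\Pi$ with $F$ entire, and here in fact $\Pi=s_\ell/z^\ell$, so $u_0+\Pi\equiv0$ and $F=\varphi_\ell$ exactly. This matches the remark following Lemma~\ref{lem:cauchy}. Lemma~\ref{lem:cauchy} also extends the representation to the endpoint $u=0$, where $-w/n=0$ lies on $E$.

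Theorem~\ref{thm:class} then gives $\lim_n E_n(\varphi_\ell)^{1/n}=H$. Finiteness of $E_n(\varphi_\ell)$, which is implicit in the statement, holds by Lemma~\ref{lem:rep}(ii).

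I expect no real obstacle. The only step needing care is the pole condition: the theorem must be applied to the combination $f=\varphi_\ell$, not to $u_0$ and $u_1$ separately. Once that is noted, the corollary is immediate.
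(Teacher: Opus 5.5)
Your proposal is correct and is essentially the paper's proof. For $\ell=0$ it is the Gonchar--Rakhmanov theorem, or equivalently Theorem~\ref{thm:class} with $u_0\equiv0$, $u_1\equiv1$, which just reproduces \cite[\S2]{GoncharRakhmanov}; for $\ell\ge1$ you apply Theorem~\ref{thm:class} with $u_1=z^{-\ell}$, $u_0=-s_\ell/z^\ell$, check that $\varphi_\ell$ has no pole on $\Rm$, and note $\Pi=-u_0$, so $F=\varphi_\ell$ and $d=0$.
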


\begin{proof}
For $\ell=0$ this is the `1/9'-theorem of \cite{GoncharRakhmanov}. For
$\ell\ge1$, Lemma~\ref{lem:rep}(i) gives $\varphi_\ell=u_0+u_1\exp$ with
$u_1=z^{-\ell}$ and $u_0=-s_\ell/z^\ell$, both rational, and
Lemma~\ref{lem:rep}(ii) shows $\varphi_\ell$ has no pole on $\Rm$. Apply
Theorem~\ref{thm:class}; here $\Pi=s_\ell/z^\ell=-u_0$, so $F=\varphi_\ell$ and
$d=0$, and \eqref{eq:cauchyrep} is \eqref{eq:cauchyphi} rescaled.
\end{proof}

\begin{remark}[scope]\label{rem:scope}
For rational $u_1$, Theorem~\ref{thm:class} is Theorem 1 of
\cite{StahlSchmelzer}, whose proof was deferred to a manuscript that never
appeared. The two remaining hypotheses cannot be weakened: $f$ must have no
pole on $\Rm$, or $E_n(f)$ is infinite, and $u_1\not\equiv0$, or $f$ is
rational and $E_n(f)=0$ eventually. Nothing is asked of the \emph{location} of
the zeros and poles of $u_1$ off $\Rm$, and a pole of $u_1$ may sit on $\Rm$
provided $u_0$ cancels it, as it does for $\varphi_\ell$.

It is worth seeing what each half of (b) does. The lower bound is $1^\circ$
itself, and cannot be dropped:
$1^\circ$ asks for uniform convergence of $\log|\Phi_n|^{-1}$ on $\Omega$, and
that fails at any zero of $\Phi_n$ there. That it is asked only away from $\Rm$
buys more than it looks, because $-\lambda\Omega$ misses $\Rm$ for every
$\lambda>0$: the zeros of $u_1$ on $\Rm$ are never seen, however many there
are, and the theorem is genuinely wider than the rational case. It applies, for
instance, to
\[
   u_1(z)=\frac{\sinh\pi\sqrt z}{\pi\sqrt z}=\prod_{k\ge1}\Bigl(1+\frac
   z{k^2}\Bigr),
\]
entire of order $\tfrac12$, with infinitely many zeros, all of them at
$-k^2\in\Rm$. This is where the asymmetry of (b) earns its keep. The upper
bound holds everywhere, since $|\sinh\pi\sqrt z|\le e^{\pi|\sqrt z|}$; on
$|\arg z-\pi|\ge\delta$ one has $\operatorname{Re}\sqrt z\ge|\sqrt
z|\sin(\delta/2)$, so $\bigl|\log|u_1(z)|\bigr|=O(\sqrt{|z|})$ there and the
lower bound holds too. Asked on all of $\C$, the lower bound would fail at
every $z=-k^2$, and the example --- the one that separates the theorem from
the rational case --- would be excluded by its own hypothesis.

The lower bound also settles what is allowed off $\Rm$. Whether it holds is
a matter of growth, but what it demands of the zeros is only this: finitely
many in each sector away from $\Rm$. Finitely many off $\Rm$ altogether, the
simplest way to meet that, is not required; zeros off $\Rm$ may be infinite in
number provided they recede to $\Rm$ in argument, as $-k^2+ik$ do.

The widening comes from the zeros and not from the growth rate: if $u_1$ is meromorphic with finitely
many zeros \emph{and} finitely many poles, then $u_1=R_0e^{g}$ with $R_0$
rational and $g$ entire, so the upper bound in (b) --- which is all that is
needed here --- forces $\operatorname{Re}g(z)\le o(|z|)$; by
Borel--Carath\'eodory, which asks for nothing more, $\max_{|z|\le r/2}|g|=o(r)$,
and Cauchy's estimates on
$|z|=r/2$ then kill every coefficient of $g$ beyond the constant. So within that class (b)
already implies rationality, and one gains nothing by relaxing growth alone.

Nothing in the equilibrium analysis had to change: the field, the arc $F^{*}$,
the $S$-property, the elliptic parametrisation and the constant are those of
\cite[\S2]{GoncharRakhmanov}. This is worth contrasting with the route
projected in \cite[\S4]{StahlSchmelzer} for the missing proof. There the
analysis of \cite{GoncharRakhmanov} was to be redone from the inside --- the
Hermite--Walsh formula, the orthogonality of the denominator, the special
potential $p_0$ --- and the passage to the class was expected to be ``more
complicated'', carried out ``with the help of a special weighted norm''
$\|g\|_{u}=\|u_1^{-1}g\|_{\Rm}$, with respect to which the oscillatory
property of the best approximants and every subsequent step would have to be
re-established; it was noted as ``critical'' that $p_0$ survives the change.
That last observation is the same one made here, but it is used differently.
Once it is seen that $u_1$ may be absorbed into the weight $\Phi_n$ of
Theorem~\ref{thm:GR}, no weighted norm is needed and no step has to be
adapted, because \cite{GoncharRakhmanov} is applied rather than reproved. The
extension its authors expected to cost a paper costs a change of $\Phi_n$. What the present argument contributes is the
observation that the Cauchy representation, not the approximant, is the thing
to transfer --- Remark~\ref{prop:notransfer} says the approximant cannot
be --- and that $\varphi_\ell$ possesses such a representation. That is
the contour integral of Lemma~\ref{lem:rep}(iii), and it is the whole of the
input from outside \cite{GoncharRakhmanov}.
\end{remark}

\section{What the method does not give}\label{sec:further}

The proof of Theorem~\ref{thm:class} returns a number and no approximant, and
two questions that matter in practice lie outside its reach.

The first is whether one set of poles can serve every $\varphi_\ell$. A partial
fraction evaluation of $\varphi_\ell(hA)v$ costs one shifted solve
$(A-z_jI)x_j=v$ per pole, so if the $\varphi_\ell$ for $\ell=0,\dots,L$ could
share their poles the cost of a whole exponential integrator step would be that
of a single $\varphi$. This is the strategy of \cite[\S4]{SchmelzerTrefethen},
where the common-pole approximations generated there are found to be ``far from
optimal'', and of the shared-denominator Carath\'eodory--Fej\'er construction of
Al-Mohy \cite{SharedPole}. Nothing in Theorem~\ref{thm:class} bears on it: the
rate is a statement about each $\varphi_\ell$ separately. What is needed is an
explicit approximant, and there is one --- correcting the numerator of
$\rstar{m,m}(\exp)=P/S$ by $S\tau$, with $\tau$ the $\ell$-jet at the origin of
the classical error, keeps the denominator $S$ and makes the error exactly
$(\exp-P/S-\tau)/z^\ell$ --- but the analysis belongs with the numerical
question rather than here.

The second is whether the poles escape. Their limiting distribution is known,
but it is a measure-level statement and cannot exclude a stray pole near the
origin, which is exactly what would
spoil a partial fraction evaluation. Computation up to degree $20$ shows no
such pole, the distance from the poles to $\Rm$ growing linearly, but this is
the case $f=\varphi_\ell$ of Conjecture 1 of \cite{StahlSchmelzer} and remains
open.

\section*{Acknowledgements}

My thanks go to Nick Trefethen, who introduced me to Herbert Stahl and who was
the first to conjecture that the rate for the $\varphi$-functions is again
Halphen's constant --- the statement recorded as Conjecture 2.1 of
\cite{SchmelzerTrefethen} and proved here as Corollary~\ref{thm:conj}. Without
that guess, and without the collaboration with Stahl that it led to, this note
would not exist. I also thank the editor who saw that the growth condition of Theorem~\ref{thm:class}, as first
submitted, was two-sided and hence --- by the argument of
Remark~\ref{rem:scope} itself --- forced $u_1$ to be rational. Condition (b)
has its present form because of that observation.

\section*{Use of artificial intelligence}

Claude (Anthropic), run through Claude Code, was used at several stages of this
work, and the record of that use is the commit history of the repository from
which this paper is built. It was used as a critic while the argument was
taking shape: statements were put to it and its objections answered, which is
how the hypotheses of Theorem~\ref{thm:class} were weakened to their present
form and how a gap in an earlier version of the argument came to light. It was
used to write and run the scripts behind the figure and the numerical checks
reported here, all of which are included with the paper. And it was used
editorially, on the exposition and on the reference list. Every mathematical
statement was checked by me, and I am responsible for the mathematics and for
the paper as it stands.


\begin{thebibliography}{99}

\bibitem{SharedPole}
A.~H.~Al-Mohy, Shared-pole Carath\'eodory--Fej\'er approximations for linear
combinations of $\varphi$-functions, Mathematics, 13:24 (2025), art.~3985,
DOI 10.3390/math13243985.

\bibitem{Aptekarev}
A.~I.~Aptekarev, Sharp constants for rational approximations of analytic
functions, Sb. Math., 193:1 (2002), pp.~1--72,
DOI 10.1070/SM2002v193n01ABEH000619.

\bibitem{CRV}
A.~J.~Carpenter, A.~Ruttan, and R.~S.~Varga, Extended numerical computations on
the `1/9' conjecture in rational approximation theory, in Rational
Approximation and Interpolation, P.~R.~Graves-Morris, E.~B.~Saff, and
R.~S.~Varga, eds., Lecture Notes in Math. 1105, Springer, Berlin, 1984,
pp.~383--411, DOI 10.1007/BFb0072427.

\bibitem{CMV}
W.~J.~Cody, G.~Meinardus, and R.~S.~Varga, Chebyshev rational approximations to
$e^{-x}$ in $[0,+\infty)$ and applications to heat-conduction problems,
J. Approx. Theory, 2:1 (1969), pp.~50--65, DOI 10.1016/0021-9045(69)90030-6.

\bibitem{Gonchar1986}
A.~A.~Gonchar, Rational approximations of analytic functions (Russian), in
Proceedings of the International Congress of Mathematicians, Vol.~1 (Berkeley,
1986), Amer. Math. Soc., Providence, RI, 1987, pp.~739--748.

\bibitem{GoncharRakhmanov}
A.~A.~Gonchar and E.~A.~Rakhmanov, Equilibrium distributions and degree of
rational approximation of analytic functions, Math. USSR-Sb., 62:2 (1989),
pp.~305--348, DOI 10.1070/SM1989v062n02ABEH003242.

\bibitem{GutknechtTrefethen}
M.~H.~Gutknecht and L.~N.~Trefethen, Real polynomial Chebyshev approximation by
the Carath\'eodory--Fej\'er method, SIAM J. Numer. Anal., 19:2 (1982),
pp.~358--371, DOI 10.1137/0719022.

\bibitem{Halphen}
G.~H.~Halphen, Trait\'e des Fonctions Elliptiques et de Leurs Applications,
Premi\`ere Partie, Gauthier-Villars, Paris, 1886.

\bibitem{Magnus1986}
A.~P.~Magnus, CFGT determination of Varga's constant `1/9', Preprint B-1348,
Institut Math\'ematique, Universit\'e Catholique de Louvain, Louvain-la-Neuve,
1986.

\bibitem{Magnus1994}
A.~P.~Magnus, Asymptotics and super asymptotics of best rational approximation
error norms for the exponential function (the `1/9' problem) by the
Carath\'eodory--Fej\'er method, in Nonlinear Numerical Methods and Rational
Approximation II, A.~Cuyt, ed., Math. Appl. 296, Kluwer, Dordrecht, 1994,
pp.~173--185, DOI 10.1007/978-94-011-0970-3\_14.

\bibitem{MagnusMeinguet}
A.~P.~Magnus and J.~Meinguet, The elliptic functions and integrals of the `1/9'
problem, Numer. Algorithms, 24:1--2 (2000), pp.~117--139,
DOI 10.1023/A:1019141226189.

\bibitem{MeinardusVarga}
G.~Meinardus and R.~S.~Varga, Chebyshev rational approximations to certain
entire functions in $[0,+\infty)$, J. Approx. Theory, 3:3 (1970),
pp.~300--309, DOI 10.1016/0021-9045(70)90054-7.

\bibitem{SaffVarga}
E.~B.~Saff and R.~S.~Varga, Some open problems concerning polynomials and
rational functions, in Pad\'e and Rational Approximation, E.~B.~Saff and
R.~S.~Varga, eds., Academic Press, New York, 1977, pp.~483--488,
DOI 10.1016/B978-0-12-614150-4.50047-2.

\bibitem{SchmelzerTrefethen}
T.~Schmelzer and L.~N.~Trefethen, Evaluating matrix functions for exponential
integrators via Carath\'eodory--Fej\'er approximation and contour integrals,
Electron. Trans. Numer. Anal., 29 (2007/08), pp.~1--18.

\bibitem{Schoenhage}
A.~Sch\"onhage, Zur rationalen Approximierbarkeit von $e^{-x}$ \"uber
$[0,\infty)$, J. Approx. Theory, 7:4 (1973), pp.~395--398,
DOI 10.1016/0021-9045(73)90042-7.

\bibitem{Stahl1985}
H.~Stahl, The structure of extremal domains associated with an analytic
function, Complex Variables Theory Appl., 4:4 (1985), pp.~339--354,
DOI 10.1080/17476938508814119.

\bibitem{Stahl1986}
H.~Stahl, Orthogonal polynomials with complex-valued weight function, I, II,
Constr. Approx., 2:1 (1986), pp.~225--240 and pp.~241--251,
DOI 10.1007/BF01893429 and DOI 10.1007/BF01893430.

\bibitem{StahlSchmelzer}
H.~Stahl and T.~Schmelzer, An extension of the `1/9'-problem, J. Comput. Appl.
Math., 233:3 (2009), pp.~821--834, DOI 10.1016/j.cam.2009.02.084.

\bibitem{TrefethenGutknecht}
L.~N.~Trefethen and M.~H.~Gutknecht, The Carath\'eodory--Fej\'er method for real
rational approximation, SIAM J. Numer. Anal., 20:2 (1983), pp.~420--436,
DOI 10.1137/0720030.

\bibitem{TWS}
L.~N.~Trefethen, J.~A.~C.~Weideman, and T.~Schmelzer, Talbot quadratures and
rational approximations, BIT, 46:3 (2006), pp.~653--670,
DOI 10.1007/s10543-006-0077-9.

\end{thebibliography}
\end{document}